\documentclass[11pt]{article}

\usepackage[a4paper,left=1.15in,right=1.15in,top=1.1in,bottom=1.15in]{geometry}
\usepackage{amsmath,amssymb,amsthm,mathtools}
\usepackage{hyperref}

\newtheorem{theorem}{Theorem}
\newtheorem{corollary}{Corollary}
\newtheorem{remark}{Remark}

\newcommand{\T}{\mathbb T}
\newcommand{\D}{\mathbb D}
\newcommand{\C}{\mathbb C}
\newcommand{\dm}{\,dm}
\newcommand{\norm}[1]{\|#1\|}
\newcommand{\Cat}{\mathsf G}

\title{Sharp Estimates for Conjugate Functions with Applications to Trigonometric Polynomials}
\author{Silouanos Brazitikos}

\date{}

\begin{document}

\maketitle
\begin{abstract}
We prove a sharp estimate for conjugate functions using a harmonic
majorant in a half-strip. As an application, we remove the logarithmic loss from
a theorem of Papadopoulos on minima of trigonometric polynomials and obtain the optimal order in the corresponding inequality.
\end{abstract}
\section{Introduction}

Let
\(
        0<n_1<\cdots<n_N
\)
be distinct positive integers and let \(a_1,\dots,a_N\in\C\). Let \(\T=\mathbb R/2\pi\mathbb Z\) equipped with normalized Haar measure
\(dm(x)=dx/(2\pi)\). For $x\in\T$ we write
\[
        P(x)=\sum_{k=1}^N a_k e^{in_kx}
        =f(x)+i\widetilde f(x),
\]
and set
\[
        M=-\min_{x\in\T} f(x).
\]
The special case \(a_1=\cdots=a_N=1\) is closely related to Chowla's cosine
problem. Namely, if $A$ is a $n$-element set and 
\[
        C_A(x):=\sum_{a\in A}\cos(ax),
        \qquad
        S_A(x):=\sum_{a\in A}\sin(ax),
\]
then \(C_A\) and \(S_A\) are conjugate trigonometric sums. Chowla's conjecture
asserts that
\[
        -\min_x C_A(x)\geq c\sqrt N
\]
with an absolute constant \(c>0\), uniformly over all sets \(A\) of \(N\) distinct
positive integers. Very recently, the first polynomial lower bounds for Chowla's cosine problem
were obtained independently by Bedert \cite{BedertChowla} and by
Jin, Milojevi\'c, Tomon and Zhang \cite{JinMilojevicTomonZhang}.
Bedert proved that
\[
        \min_x \sum_{a\in A}\cos(ax)
        \leq
        -|A|^{1/7-o(1)},
\]
while Jin, Milojevi\'c, Tomon and Zhang obtained
\[
        \min_x \sum_{a\in A}\cos(ax)
        \leq
        -|A|^{1/10-o(1)}.
\]
At the time of Papadopoulos' work \cite{Papadopoulos}, the strongest general lower
bound for this cosine problem was due to Bourgain \cite{Bourgain} and was super-logarithmic but
not polynomial in \(N\). A second, related question
concerns the smallest possible size of the conjugate sum
\(
        \norm{S_A}_\infty.
\)
The trivial lower bound is of order \(\sqrt N\), while Bourgain constructed
examples with
\[
        \norm{S_A}_\infty\leq C N^{2/3}
\]
as reported in Kahane's book \cite{Kahane}. In the opposite direction, Konyagin \cite{Konyagin}
proved that every such sine sum satisfies
\[
        \norm{S_A}_\infty
        \geq
        c\sqrt{\frac{N\log N}{\log\log N}}.
\]
Papadopoulos observed that these two questions are quantitatively connected.
His Theorem A states that if \(|a_k|\geq1\) for all \(k\), then
\[
        M
        \geq
        c\,\frac{N}{\norm{\widetilde f}_\infty\log N}.
\]
Thus an upper bound on the conjugate function \(\widetilde f\) forces a lower
bound on the negative minimum of \(f\). In particular, in the cosine problem,
flatness of the sine sum implies that the cosine sum must take a substantially
negative value \cite{Papadopoulos}. Papadopoulos also proved a related estimate
for the mixed sums
\[
        \sum_{k=1}^N(\cos n_kx+\sin n_kx),
\]
showing that their negative minimum is at least of order \(\sqrt N/\log N\);
this improved earlier logarithmic estimates attributed to Pichugov and discussed
by Belov and Konyagin \cite{BelovKonyagin,Papadopoulos}.
The purpose of the present note is an elementary improvement of
Papadopoulos' first estimate. This follows from a sharp inequality for
conjugate functions, obtained by solving a half-strip extremal problem. The underlying estimate is not specific to trigonometric
polynomials. It is a sharp inequality for conjugate functions. Recall
that the \(L^p\)-boundedness of the conjugate function on the circle goes back to
M. Riesz \cite{Riesz1924,Riesz1928}; the weak type \((1,1)\) theorem is due to
Kolmogorov \cite{Kolmogorov}; the sharp \(L^p\) constants for the Hilbert
transform and conjugate function were obtained by Pichorides, following earlier
work of Gohberg and Krupnik in special cases \cite{GohbergKrupnik,PichoridesConstants};
and the sharp weak type \((1,1)\) constant was found by Davis, with a later
alternative proof by Baernstein \cite{Davis,Baernstein}. We shall use a different
but related sharp principle: if \(u\geq0\) and \(v=Hu\) is bounded, then the
analytic function \(u+iv\) maps the disk into a half-strip. Solving the
corresponding Dirichlet problem in that half-strip yields the sharp estimate
\[
        \int_{\mathbb T}|v|^p\,dm
        \leq
        A_p\norm{v}_\infty^{p-1}\int_{\mathbb T}u\,dm,
        \qquad p>1,
\]
where
\[
        A_p
        =
        p\int_0^1\frac{t^{p-1}}{\sin(\pi t/2)}\,dt.
\]
This is a sharp \(L^p\) inequality for conjugate functions.

Applying this result with
\[
        u=M+f,
        \qquad
        v=\widetilde f,
\]
we obtain, for every \(p>1\),
\[
        M
        \geq
        \frac{\norm{\widetilde f}_p^p}
        {A_p\norm{\widetilde f}_\infty^{p-1}}.
\]
The case \(p=2\) is especially relevant for Papadopoulos' theorem, since
orthogonality gives
\[
        \norm{\widetilde f}_2^2
        =
        \frac12\sum_{k=1}^N |a_k|^2.
\]
Moreover,
\[
        A_2
        =
        2\int_0^1\frac{t}{\sin(\pi t/2)}\,dt
        =
        \frac{16\mathsf G}{\pi^2},
\]
where
\[
        \mathsf G
        =
        \sum_{j=0}^{\infty}\frac{(-1)^j}{(2j+1)^2}
\]
is Catalan's constant. Hence
\[
        M
        \geq
        \frac{\pi^2}{32\mathsf G}
        \frac{\sum_{k=1}^N |a_k|^2}
        {\norm{\widetilde f}_\infty}.
\]
In particular, if \(|a_k|\geq1\) for all \(k\), then
\[
        M
        \geq
        \frac{\pi^2}{32\mathsf G}
        \frac{N}{\norm{\widetilde f}_\infty}.
\]
This removes the logarithmic loss from Papadopoulos' Theorem A.

\section{A sharp \(L^p\) inequality}

\begin{theorem}
Let \(p>1\). Let
\(
        F=u+iv
\)
be analytic in the unit disk \(\D\), and suppose that \[
        u\geq0,\qquad |v|\leq B
\]
almost everywhere on \(\T\). Assume in addition that
\[
        \int_{\T}v\,dm=0.
\]
Then
\[
        \int_{\T}|v|^p\,dm
        \leq
        A_p B^{p-1}\int_{\T}u\,dm,
\]
where
\[
     A_p=
        p\int_0^1
        \frac{t^{p-1}}{\sin(\pi t/2)}\,dt.
\]
The constant \(A_p\) is best possible.
\end{theorem}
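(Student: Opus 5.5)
We scale to $B=1$ (replace $F$ by $F/B$) and use a harmonic majorant on the half-strip
\[
S=\{w=u+iv:\ u>0,\ |v|<1\}.
\]
Since the boundary values of $F$ lie in $\overline S$, the maximum principle gives $F(\D)\subset S$ (or $F$ is constant, a trivial case). The mean-value property gives $F(0)=\int_\T u\,dm+i\int_\T v\,dm=x_0\ge0$, which is real by the hypothesis $\int v\,dm=0$. The target inequality is
\[
\int_\T\bigl(|v|^p-A_pu\bigr)\,dm\le0 .
\]
It therefore suffices to produce a function $H$ that is harmonic in $S$ and satisfies two conditions: $H\ge |v|^p-A_pu$ on $\overline S$, and $H(x)\le0$ for real $x\ge0$. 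Granting these, $H\circ F$ is harmonic in $\D$, and
\[
\int_\T(|v|^p-A_pu)\,dm\le\int_\T H\circ F\,dm=H(x_0)\le0 .
\]
Here the middle equality uses boundedness from above together with the fact that $H$ differs from a bounded function by $-A_pu$, and $u$ is a positive harmonic function. To make this rigorous I would apply it to $F(r\,\cdot)$ and let $r\to1$.

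\textbf{Construction of $H$.} Let $\Phi$ be the bounded harmonic function on $S$ with boundary values $|v|^p$ on the segment $\{0\}\times[-1,1]$ and $1$ on the two rays $v=\pm1$, and set $H=\Phi-A_pu$. On $\partial S$ we have $H=|v|^p-A_pu$ exactly. In the interior, $|v|^p$ is subharmonic because it is convex in $v$ for $p>1$, and it is bounded by $1$. A Phragm\'en--Lindel\"of argument for bounded functions on the strip then gives $|v|^p\le\Phi$, so $H\ge|v|^p-A_pu$ on $\overline S$.

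\textbf{Explicit computation.} To compute $\Phi$ I would use the conformal map $\sigma(w)=\sinh(\pi w/2)$. It sends $S$ onto the right half-plane: the segment $iv$ goes to $i\sin(\pi v/2)$, and the rays $\pm i+u$ go to $\pm i\cosh(\pi u/2)$. For real $x\ge0$ put $s=\sinh(\pi x/2)$. Then
\[
\Phi(x)=\frac1\pi\int_{\mathbb R}\frac{s\,g(y)}{s^2+y^2}\,dy,
\qquad g(y)=\Bigl|\tfrac2\pi\arcsin y\Bigr|^p \text{ for } |y|<1,\quad g(y)=1 \text{ otherwise}.
\]
Letting $s\to0$ gives
\[
\Phi(x)/s\to\frac1\pi\int g(y)\,y^{-2}\,dy .
\]
Next substitute $y=\sin(\pi t/2)$ and integrate by parts; the boundary term at $t=1$ cancels the contribution $\int_{|y|>1}y^{-2}$. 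Combined with $ds/dx=\pi/2$ at $0$, this should yield exactly $\Phi'(0)=A_p$, which is how the constant is identified.

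\textbf{Main obstacle.} The remaining step is to show $\Phi(x)\le A_px$ for all $x\ge0$, i.e.\ that the slope at the origin dominates. The natural route is concavity of $x\mapsto\Phi(x,0)$. Since $\Phi$ is harmonic, $\partial_{uu}\Phi=-\partial_{vv}\Phi$, so it suffices to show $\Phi(u,\cdot)$ is convex in $v$ on the axis. I would first try to prove this from the Poisson representation: the boundary data $v\mapsto|v|^p$ on the segment is convex, and the data is constant $1$ on the rays. I would attempt to show that $\partial_{vv}\Phi\ge0$ propagates, by viewing $\partial_{vv}\Phi$ as a harmonic function with controllable boundary sign. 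If that fails, I would instead bound the kernel directly, using $\Phi\le1$ for large $x$ and the explicit integral for small $x$.

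\textbf{Sharpness.} Take $F_\varepsilon$ to be the conformal map of $\D$ onto $S$ with $F_\varepsilon(0)=\varepsilon>0$ real. Since $H=|v|^p-A_pu$ on $\partial S$, equality holds in the boundary comparison, and
\[
\int|v|^p\,dm-A_p\int u\,dm=\Phi(\varepsilon)-A_p\varepsilon=o(\varepsilon),
\qquad \int u\,dm=\varepsilon .
\]
Hence the ratio $\int|v|^p\,dm\big/\int u\,dm$ tends to $A_p$ as $\varepsilon\to0$, so $A_p$ cannot be improved.
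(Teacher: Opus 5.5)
Your reduction is the paper's. Your $\Phi$ is its $U_p$, and the majorization $|v|^p\le\Phi$ plus the mean value property reduce the theorem to the one-variable inequality $\Phi(X,0)\le A_pX$ for every $X>0$. Your computation $\partial_x\Phi(0,0)=A_p$ through $\sinh(\pi w/2)$ is correct. So is your sharpness argument with the conformal maps $F_\varepsilon$; these lie in $H^1$, so the boundary means are legitimate.

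The gap is that the one-variable inequality is never proved, and it is the entire content of the estimate. You flag it as the main obstacle and give only plans. The slope at the origin yields $\Phi(X,0)=A_pX+o(X)$, which is sharpness, not the bound at a fixed $X$.

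Your fallback does not work as stated. The bounds it naturally produces are $\Phi\le1$ and, from $s/(s^2+y^2)\le s/y^2$ in your Poisson formula, $\Phi(x,0)\le\frac{2}{\pi}A_p\sinh(\pi x/2)$. Both exceed $A_px$ on $(0,1/A_p)$. Because the target is tight to first order at $x=0$, any small-$x$/large-$x$ splitting needs exactly the global information that is missing. The concavity route is the right idea, and $x\mapsto\Phi(x,0)$ is in fact concave. But your maximum-principle argument for $\partial_{vv}\Phi\ge0$ is only sketched and would need care; for instance, its boundary data $p(p-1)|v|^{p-2}$ is unbounded at $v=0$ when $p<2$.

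The missing step is to differentiate in $x$ for $x>0$ and bound the resulting kernel pointwise. The paper expands $1-U_p$ in the Dirichlet eigenfunctions $e^{-\lambda_jx}\cos(\lambda_jy)$, $\lambda_j=(j+\frac12)\pi$, integrates the coefficients by parts and sums the series. This gives $g_p'(x)=2p\int_0^1t^{p-1}K_x(t)\,dt$ with an explicit $K_x$ satisfying $K_x(t)\le1/(2\sin(\pi t/2))$. Hence $g_p'\le A_p$ on $(0,\infty)$ and $g_p(X)\le A_pX$.

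You can reach the same formula inside your own framework:
\begin{itemize}
\item differentiate $\Phi(x,0)=\frac1\pi\int_{\mathbb R}\frac{s\,g(y)}{s^2+y^2}\,dy$ in $x$;
\item write $\frac{y^2-s^2}{(s^2+y^2)^2}=-\partial_y\frac{y}{s^2+y^2}$;
\item integrate by parts in $y$ (only $|y|<1$ contributes, since $g$ is constant elsewhere);
\item substitute $y=\sin(\pi t/2)$.
\end{itemize}
This gives
\[
\partial_x\Phi(x,0)=p\int_0^1 t^{p-1}\,\frac{c\,\sigma}{c^2-1+\sigma^2}\,dt,
\qquad c=\cosh(\pi x/2),\quad \sigma=\sin(\pi t/2).
\]
Since $\partial_c\bigl(c/(c^2-1+\sigma^2)\bigr)=(\sigma^2-1-c^2)/(c^2-1+\sigma^2)^2<0$, the integrand decreases as $x$ increases. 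At $c=1$ it equals $t^{p-1}/\sigma$, and $p\int_0^1 t^{p-1}/\sigma\,dt=A_p$. Thus $\partial_x\Phi(x,0)\le A_p$ for all $x>0$, which supplies both the missing bound and the concavity you were after.
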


\begin{proof}
Due to homogeneity, it is enough to prove the result when \(B=1\). Then
\(
        F(\D)\subset \Omega\), where
\[
        \Omega=\{x+iy:x>0,\ |y|<1\}.
\]
Let \(U_p\) be the harmonic function in \(\Omega\) with boundary values
\[
        U_p(0,y)=|y|^p,\qquad -1<y<1,
\]
and
\[
        U_p(x,1)=U_p(x,-1)=1,\qquad x>0.
\]
We first claim that for all $(x,y)\in \Omega$
\[
        |y|^p\leq U_p(x,y).
\]
Indeed, \(|y|^p-U_p(x,y)\) is subharmonic in the distributional sense, since
\(|y|^p\) is convex for \(p>1\), and it is non-positive on the boundary of
\(\Omega\). The maximum principle on truncated half-strips, followed by passage
to the limit, gives the claim.

Applying the claim to \((x,y)=(u(z),v(z))\), we get
\[
        |v(z)|^p\leq U_p(u(z),v(z)).
\]
Since \(U_p\) is
harmonic in the half-strip \(\Omega\) and \(F=u+iv\) is analytic with values in
\(\Omega\), the composition
\[
        z\mapsto U_p(u(z),v(z))=U_p(F(z))
\]
is harmonic in \(\D\). Hence, by the mean value property for harmonic functions,
for \(0<r<1\),
\[
        \int_{\T}
        U_p(u(re^{it}),v(re^{it}))\,dm(t)
        =
        U_p(u(0),v(0)).
\]
Letting \(r\to1^-\), and using the radial boundary values, we obtain
\[
        \int_{\T}U_p(u,v)\,dm
        =
        U_p(u(0),v(0)).
\]

On the other hand, since \(u\) and \(v\) are the real and imaginary parts of the
analytic function \(F\), their boundary means are the real and imaginary parts of
\(F(0)\). Equivalently,
\[
        F(0)
        =
        \int_{\T}F\,dm
        =
        \int_{\T}u\,dm
        +
        i\int_{\T}v\,dm.
\]
By our hypothesis, \(
        \int_{\T}v\,dm=0,
\)
therefore \(F(0)\) is real and positive, and
\[
        F(0)=u(0)=\int_{\T}u\,dm:=X.
\]
Then
\[
        U_p(u(0),v(0))=U_p(X,0).
\]
Combining this with
\[
        |v|^p\leq U_p(u,v)
\]
gives
\[
        \int_{\T}|v|^p\,dm
        \leq
        \int_{\T}U_p(u,v)\,dm
        =
        U_p(X,0).
\]
It remains to estimate \(U_p(X,0)\). Put
\[
        W_p(x,y)=1-U_p(x,y).
\]
Then \(W_p\) is harmonic in the half-strip
\[
        \Omega=\{(x,y):x>0,\ -1<y<1\},
\]
and satisfies
\[
        W_p(x,\pm1)=0,
        \qquad
        W_p(0,y)=1-|y|^p.
\]

We use the Dirichlet eigenfunctions of the interval \((-1,1)\). Since the
boundary datum \(1-|y|^p\) is even, we expand it in the even Dirichlet
eigenfunctions. These are
\[
        \cos(\lambda_jy),
        \qquad
        \lambda_j=\left(j+\frac12\right)\pi,
        \qquad j=0,1,2,\dots,
\]
because \(\cos(\lambda_j)=0\). They form an orthogonal basis of the even
subspace of \(L^2(-1,1)\), and
\[
        \int_{-1}^{1}\cos^2(\lambda_jy)\,dy=1.
\]
Thus
\[
        1-|y|^p
        =
        \sum_{j=0}^{\infty}c_j^{(p)}\cos(\lambda_jy),
\]
where
\[
        c_j^{(p)}
        =
        \int_{-1}^{1}(1-|y|^p)\cos(\lambda_jy)\,dy
        =
        2\int_0^1(1-y^p)\cos(\lambda_jy)\,dy.
\]
For each \(j\), the function
\[
        e^{-\lambda_jx}\cos(\lambda_jy)
\]
is harmonic in the half-strip and vanishes on \(y=\pm1\). Hence the series
\[
        \widetilde W_p(x,y)
        :=
        \sum_{j=0}^{\infty}
        c_j^{(p)}e^{-\lambda_jx}\cos(\lambda_jy)
\]
defines, for \(x>0\), the bounded harmonic extension of the datum
\(1-|y|^p\). By uniqueness of the bounded solution of the Dirichlet problem in
the half-strip,
\[
        W_p(x,y)=\widetilde W_p(x,y).
\]
Consequently,
\[
        W_p(x,y)
        =
        \sum_{j=0}^{\infty}
        c_j^{(p)}e^{-\lambda_jx}\cos(\lambda_jy).
\]

Define
\[
        g_p(x):=U_p(x,0).
\]
Since \(W_p=1-U_p\), we have
\[
        g_p(x)
        =
        1-W_p(x,0)
        =
        1-
        \sum_{j=0}^{\infty}
        c_j^{(p)}e^{-\lambda_jx}.
\]
In particular \(g_p(0)=U_p(0,0)=0\).

For \(x>0\), the exponential factor \(e^{-\lambda_jx}\) justifies termwise
differentiation. Indeed, by integration by parts,
\[
\begin{aligned}
        c_j^{(p)}\lambda_j
        &=
        2\int_0^1(1-y^p)\lambda_j\cos(\lambda_jy)\,dy  \\
        &=
        2\int_0^1(1-y^p)\,d(\sin(\lambda_jy))  \\
        &=
        2p\int_0^1 y^{p-1}\sin(\lambda_jy)\,dy.
\end{aligned}
\]
The boundary term vanishes because \(1-y^p=0\) at \(y=1\) and
\(\sin(\lambda_jy)=0\) at \(y=0\). Hence
\[
        |c_j^{(p)}\lambda_j|
        \leq
        2p\int_0^1y^{p-1}\,dy
        =
        2.
\]
Therefore, for every \(x>0\),
\[
        \sum_{j=0}^{\infty}
        |c_j^{(p)}|\lambda_j e^{-\lambda_jx}
        <
        \infty,
\]
and so
\[
        g_p'(x)
        =
        \sum_{j=0}^{\infty}
        c_j^{(p)}\lambda_j e^{-\lambda_jx}.
\]
Using the formula for \(c_j^{(p)}\lambda_j\), we obtain
\[
        g_p'(x)
        =
        2p\sum_{j=0}^{\infty}e^{-\lambda_jx}
        \int_0^1 y^{p-1}\sin(\lambda_jy)\,dy.
\]
Since \(x>0\), the series is absolutely convergent, and Fubini's theorem gives
\[
        g_p'(x)
        =
        2p\int_0^1 y^{p-1}
        \left(
        \sum_{j=0}^{\infty}e^{-\lambda_jx}\sin(\lambda_jy)
        \right)dy.
\]
For \(x>0\) and \(0<y<1\), set
\[
        K_x(y):=
        \sum_{j=0}^{\infty}
        e^{-\lambda_jx}\sin(\lambda_jy).
\]
Thus
\[
        g_p'(x)
        =
        2p\int_0^1 y^{p-1}K_x(y)\,dy.
\]
We now compute \(K_x\). Since
\[
        \lambda_j=\left(j+\frac12\right)\pi,
\]
writing
\[
        r=e^{-\pi x},
        \qquad
        \theta=\pi y,
\]
we get
\[
        K_x(y)
        =
        \operatorname{Im}
        \left(
        \frac{r^{1/2}e^{i\theta/2}}{1-r e^{i\theta}}
        \right).
\]
Hence
\[
        K_x(y)
        =
        \frac{
        r^{1/2}(1+r)\sin(\theta/2)
        }{
        1-2r\cos\theta+r^2
        }.
\]
Equivalently,
\[
        K_x(y)
        =
        \frac{
        e^{-\pi x/2}(1+e^{-\pi x})\sin(\pi y/2)
        }{
        1-2e^{-\pi x}\cos(\pi y)+e^{-2\pi x}
        }.
\]

We shall use the elementary bound
\[
        K_x(y)\leq \frac{1}{2\sin(\pi y/2)}.
\]
Indeed, putting
\[
        s=\sin(\theta/2),
        \qquad
        a=\sqrt r,
\]
this inequality is equivalent to
\[
        2s\cdot
        \frac{a(1+a^2)s}{(1-a^2)^2+4a^2s^2}
        \leq 1.
\]
After rearrangement, this becomes
\[
        (1-a^2)^2+4a^2s^2-2a(1+a^2)s^2\geq0.
\]
The left-hand side equals
\[
        (1-a)^2\bigl((1+a)^2-2as^2\bigr),
\]
which is non-negative since \(0<s\leq1\).

Therefore, for every \(x>0\),
\[
        g_p'(x)
        =
        2p\int_0^1 y^{p-1}K_x(y)\,dy
        \leq
        p\int_0^1
        \frac{y^{p-1}}{\sin(\pi y/2)}\,dy
        =
        A_p.
\]
Since \(g_p(0)=0\), it follows that
\[
        g_p(X)
        =
        \int_0^X g_p'(x)\,dx
        \leq
        A_pX.
\]

It remains to identify the sharp constant. For fixed \(0<y<1\), the explicit
formula for \(K_x(y)\) gives
\[
        K_x(y)\longrightarrow
        \frac{1}{2\sin(\pi y/2)}
        \qquad (x\to0^+).
\]
Moreover, the bound just proved gives the domination
\[
        y^{p-1}K_x(y)
        \leq
        \frac{y^{p-1}}{2\sin(\pi y/2)}.
\]
The right-hand side is integrable on \((0,1)\), precisely because \(p>1\).
Thus, by dominated convergence,
\[
        \lim_{x\to0^+}g_p'(x)
        =
        2p\int_0^1
        y^{p-1}
        \frac{1}{2\sin(\pi y/2)}\,dy
        =
        A_p.
\]
Consequently,
\[
        \lim_{X\to0^+}\frac{g_p(X)}{X}=A_p.
\]
This gives the sharpness statement needed below.
\end{proof}

\section{Consequences for Papadopoulos' setting}

We now return to the trigonometric polynomial
\[
        P(x)=\sum_{k=1}^N a_k e^{in_kx}
        =
        f(x)+i\widetilde f(x),
\]
and set
\[
        M=-\min_{\T}f.
\]

\begin{corollary}
For every \(p>1\),
\[
         M
        \geq
        \frac{\norm{\widetilde f}_p^p}
        {A_p\norm{\widetilde f}_\infty^{p-1}},
\]
where
\[
        A_p=
        p\int_0^1
        \frac{t^{p-1}}{\sin(\pi t/2)}\,dt.
\]
\end{corollary}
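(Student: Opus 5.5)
We apply Theorem 1 with $u=M+f$, $v=\widetilde f$, and $B=\norm{\widetilde f}_\infty$. The analytic function is
\[
        F(z)=M+\sum_{k=1}^N a_k z^{n_k},
\]
a polynomial and hence analytic in $\D$. Its boundary values on $\T$ are
\[
        F(e^{ix})=M+P(x)=(M+f(x))+i\widetilde f(x),
\]
so $\operatorname{Re}F=u$ and $\operatorname{Im}F=v$ on $\T$.

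We then check the hypotheses of Theorem 1.
\begin{itemize}
\item[(i)] $u=M+f\geq M+\min_{\T}f=0$ on $\T$, directly from the definition of $M$.
\item[(ii)] $|v|\leq B$ holds trivially with $B=\norm{\widetilde f}_\infty$.
\item[(iii)] $\int_{\T}v\,dm=0$, because every frequency $n_k$ is nonzero, so $\widetilde f=\operatorname{Im}P$ has zero mean.
\end{itemize}
If $B=0$, then $\widetilde f\equiv0$. Since the $n_k$ are distinct and positive, $a_k e^{in_kx}$ and $\overline{a_k}e^{-in_kx}$ have different frequencies, so all $a_k=0$. The inequality is then read as trivial (or the degenerate case is excluded), and we assume $B>0$.

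Theorem 1 gives
\[
        \int_{\T}|\widetilde f|^p\,dm\leq A_p B^{p-1}\int_{\T}(M+f)\,dm.
\]
The right-hand integral equals $M$, since $\int_{\T}f\,dm=\operatorname{Re}\int_{\T}P\,dm=0$, again because no frequency is zero. Dividing by $A_pB^{p-1}>0$ yields
\[
        M\geq\frac{\norm{\widetilde f}_p^p}{A_p\norm{\widetilde f}_\infty^{p-1}}.
\]

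No step is a real obstacle: the argument rests entirely on Theorem 1. The only points requiring care are the zero means of $f$ and $\widetilde f$, which follow from positivity of the frequencies, and the degenerate case $\widetilde f\equiv0$.
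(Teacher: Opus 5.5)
Your proof is correct and follows the paper's argument: apply Theorem 1 to $F=M+P$ with $u=M+f$, $v=\widetilde f$, $B=\norm{\widetilde f}_\infty$, and use $\int_{\T}u\,dm=M$. Beyond the paper's version, you explicitly check $\int_{\T}\widetilde f\,dm=0$, a hypothesis of Theorem 1 that the paper leaves implicit, and you handle the degenerate case $\widetilde f\equiv0$.
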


\begin{proof}
Consider
\[
        F(x)=M+P(x)=u(x)+iv(x),
\]
where
\[
        u(x)=M+f(x),
        \qquad
        v(x)=\widetilde f(x).
\]
By definition of \(M\),
\[
        u(x)\geq0.
\]
Moreover,
\[
        \norm{v}_\infty=\norm{\widetilde f}_\infty.
\]
Since \(P\) has no constant term,
\[
        \int_{\T}f\,dm=0,
\]
and therefore
\[
        \int_{\T}u\,dm=M.
\]
Applying the preceding theorem gives
\[
        \norm{\widetilde f}_p^p
        \leq
        A_p\norm{\widetilde f}_\infty^{p-1}M.
\]
Rearranging proves the claim.
\end{proof}

Taking \(p=2\) gives the logarithm-free improvement of Papadopoulos' Theorem A.

\begin{corollary}
One has
\[
        M
        \geq
        \frac{\pi^2}{32\Cat}
        \frac{\sum_{k=1}^N |a_k|^2}
        {\norm{\widetilde f}_\infty}.
\]
In particular, if \(|a_k|\geq1\) for all \(k\), then
\[
        M
        \geq
        \frac{\pi^2}{32\Cat}
        \frac{N}{\norm{\widetilde f}_\infty}.
 \]
\end{corollary}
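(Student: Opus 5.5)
The plan is to specialize the preceding corollary to \(p=2\) and then evaluate the two quantities that appear in it: \(\norm{\widetilde f}_2^2\) and \(A_2\).

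First, \(\norm{\widetilde f}_2^2\). Since \(P=f+i\widetilde f\) has only positive frequencies, its conjugate satisfies \(\overline P=f-i\widetilde f\), so
\[
        \widetilde f=\frac{P-\overline P}{2i}
        =\frac1{2i}\sum_{k=1}^N\bigl(a_ke^{in_kx}-\overline{a_k}e^{-in_kx}\bigr).
\]
The \(2N\) exponentials \(e^{\pm in_kx}\) have distinct frequencies, because the \(n_k\) are distinct and positive. Parseval on \(\T\) with the normalized measure \(m\) therefore gives
\[
        \norm{\widetilde f}_2^2=\frac14\sum_{k=1}^N 2|a_k|^2=\frac12\sum_{k=1}^N|a_k|^2 .
\]

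Second, \(A_2\). This is the only step that needs real work: we must show that
\[
        A_2=2\int_0^1\frac{t}{\sin(\pi t/2)}\,dt=\frac{16\Cat}{\pi^2}.
\]
Substituting \(\theta=\pi t/2\) turns this into
\[
        A_2=\frac{8}{\pi^2}\int_0^{\pi/2}\frac{\theta}{\sin\theta}\,d\theta,
\]
so it suffices to prove the classical identity \(\int_0^{\pi/2}\theta/\sin\theta\,d\theta=2\Cat\).

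To prove it, integrate by parts using \(d\log\tan(\theta/2)=d\theta/\sin\theta\). The boundary terms vanish: at \(\theta=\pi/2\) the logarithm is \(\log 1=0\), and near \(\theta=0\) we have \(\theta\log\tan(\theta/2)\to0\). Hence
\[
        \int_0^{\pi/2}\frac{\theta}{\sin\theta}\,d\theta=-\int_0^{\pi/2}\log\tan(\theta/2)\,d\theta .
\]
Now substitute \(s=\tan(\theta/2)\), so that \(d\theta=2\,ds/(1+s^2)\). This gives
\[
        -2\int_0^1\frac{\log s}{1+s^2}\,ds .
\]
Expand \(1/(1+s^2)=\sum_j(-1)^js^{2j}\) and use \(\int_0^1 s^{2j}\log s\,ds=-1/(2j+1)^2\). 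Integrating termwise yields \(2\sum_j(-1)^j/(2j+1)^2=2\Cat\). The termwise integration is justified by Abel summation, or more simply by dominated convergence: the partial sums of the alternating geometric series are bounded by \(1\), and \(|\log s|\) is integrable on \((0,1)\).

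Finally, insert both values into the corollary with \(p=2\):
\[
        M\ge\frac{\norm{\widetilde f}_2^2}{A_2\norm{\widetilde f}_\infty}
        =\frac{\tfrac12\sum_k|a_k|^2\cdot\pi^2}{16\Cat\,\norm{\widetilde f}_\infty}
        =\frac{\pi^2}{32\Cat}\frac{\sum_{k=1}^N|a_k|^2}{\norm{\widetilde f}_\infty}.
\]
If \(|a_k|\ge1\) for all \(k\), then \(\sum_k|a_k|^2\ge N\), which gives the second inequality.

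If \(\norm{\widetilde f}_\infty=0\), then \(\widetilde f\equiv0\), which forces all \(a_k=0\). In that case the statement is trivial, or vacuous under the convention that the quotient is not defined.
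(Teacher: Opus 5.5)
Your proof is correct and follows the paper's argument: you specialize the preceding corollary to \(p=2\), compute \(\norm{\widetilde f}_2^2=\frac12\sum_k|a_k|^2\) by orthogonality, and substitute \(A_2=16\Cat/\pi^2\); the only difference in the \(L^2\) step is that you use the exponentials \(e^{\pm in_kx}\) where the paper uses sines and cosines. You also verify the identity \(\int_0^{\pi/2}\theta/\sin\theta\,d\theta=2\Cat\), including the bound by \(1\) on the partial sums that justifies dominated convergence, whereas the paper simply asserts the value of \(A_2\).
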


\begin{proof}
For \(p=2\),
\[
        A_2=
        2\int_0^1\frac{t}{\sin(\pi t/2)}\,dt
        =
        \frac{16\Cat}{\pi^2}.
\]
The \(L^2\) norm of \(\widetilde f\) is computed by orthogonality. If
\(a_k=\alpha_k+i\beta_k\), then
\[
        \mathrm{Im}(a_ke^{in_kx})
        =
        \alpha_k\sin(n_kx)+\beta_k\cos(n_kx).
\]
Since the frequencies \(n_k\) are distinct,
\[
        \norm{\widetilde f}_2^2
        =
        \frac12\sum_{k=1}^N |a_k|^2.
\]
The \(p=2\) case of the preceding corollary gives
\[
        M
        \geq
        \frac{1}{A_2}
        \frac{\norm{\widetilde f}_2^2}{\norm{\widetilde f}_\infty}
        =
        \frac{\pi^2}{16\Cat}
        \frac{\norm{\widetilde f}_2^2}{\norm{\widetilde f}_\infty}.
\]
Substituting the identity for \(\norm{\widetilde f}_2^2\), we obtain
\[
        M
        \geq
        \frac{\pi^2}{32\Cat}
        \frac{\sum_{k=1}^N |a_k|^2}
        {\norm{\widetilde f}_\infty}.
\]
If \(|a_k|\geq1\), then \(\sum |a_k|^2\geq N\), and the final claim follows.
\end{proof}

\begin{remark}
For \(p\neq2\), the corollary gives a sharp moment estimate
\[
        M
        \geq
        \frac{\norm{\widetilde f}_p^p}
        {A_p\norm{\widetilde f}_\infty^{p-1}}.
\]
The case \(p=2\) is distinguished in Papadopoulos' problem because Parseval's
identity converts \(\norm{\widetilde f}_2^2\) exactly into
\(\frac12\sum |a_k|^2\).
\end{remark}
\begin{remark}
The estimate is sharp in order of $N$. Indeed, by a theorem of
Rudin \cite{Rudin1959}, there exists a sequence
\(\varepsilon_n\in\{-1,1\}\), \(n=1,2,\dots\), such that for every \(N\geq1\),
\[
        \left\|
        \sum_{n=1}^N \varepsilon_n z^n
        \right\|_{L^\infty(\mathbb T)}
        \leq
        5\sqrt N .
\]
Set
\[
        P_N(z)=\sum_{n=1}^N \varepsilon_n z^n
\]
and write
\[
        P_N(e^{ix})=f_N(x)+i\widetilde f_N(x).
\]
Then
\[
        M_N:=-\min_{x\in\mathbb T} f_N(x)
        \leq
        \|f_N\|_\infty
        \leq
        \|P_N\|_\infty
        \leq
        5\sqrt N,
\]
and similarly
\[
        \|\widetilde f_N\|_\infty
        \leq
        \|P_N\|_\infty
        \leq
        5\sqrt N.
\]
Therefore
\[
        M_N\|\widetilde f_N\|_\infty
        \leq
        25N.
\]
Thus the inequality
\[
        M\|\widetilde f\|_\infty
        \gtrsim
       N
\]
has the correct order.
\end{remark}

\thanks{\noindent {\bf Keywords:} Conjugate functions; Hilbert transform; trigonometric polynomials; minima of
cosine sums; one-sided inequalities; harmonic majorants; sharp constants;
Chowla cosine problem.}

\smallskip

\thanks{\noindent {\bf 2020 MSC:} Primary: 42A05, 42A50. Secondary: 30H10, 31A05, 42A16.}

\bigskip

\bigskip 

\medskip 

\noindent \textsc{Silouanos \ Brazitikos}: Department of Mathematics \& Applied Mathematics, University of Crete, Voutes Campus, 70013 Heraklion, Greece.

\smallskip

\noindent \textit{E-mail:} \texttt{silouanb@uoc.gr}
\end{document}